\theoremstyle{plain}
\theoremstyle{definition}
\theoremstyle{remark}
 \numberwithin{equation}{section}
\title{Stability Analysis for the Helmholtz Equation with Many Frequencies}
\author{Elham Sohrabi\\
Division of Mathematics and Computer Science
\\ University of South Carolina Upstate, Spartanburg\\
SC 29303, USA \\
E-mail address: esohrabi@uscupstate.edu}
\date{}
\begin{document}
\maketitle
\newtheorem{theorem}{Theorem}[section]
\newtheorem{lemma}[theorem]{Lemma}
\newtheorem{corollary}[theorem]{Corollary}
\newtheorem{definition}[theorem]{Definition}
\newtheorem{proposition}[theorem]{Proposition}

{\bf Abstract.}

This paper concerns the inverse source problem for the time- harmonic wave equation in a one dimensional domain. The goal is to determine the source function from the boundary measurements. The problem is challenging due to complexity of the Green's function for the Helmholtz equation. Our main result is a logarithmic estimate consists of two parts: the data discrepancy and the high frequency tail.   

\vspace{10pt}

\textbf{Keywords:} Inverse scattering source problem, time-harmonic wave equation , Green's function

\vspace{10pt}

\textbf{Mathematics Subject Classification(2000)}: 35R30; 35J05; 35B60; 33C10; 31A15; 76Q05; 78A46

\section{Introduction and formulation of the problem }
We formulate our problem as following; 
\begin{equation}
\label{PDE}
u(x,\alpha)'' +k^2 u(x,\alpha)=f(x), \quad x\in(-1,1),
\end{equation}
where the direct solution $u$ is required to satisfy the outgoing wave conditions:
\begin{equation}
\label{outgoing cond}
   u'(1,\alpha)+iku(1,\alpha)=0, \qquad   u'(-1,\alpha)-iku(-1,\alpha)=0
\end{equation}

Let $f \in L^2 (-1,1)$, it is well-known that the problem \eqref{PDE}-\eqref{outgoing cond} has a unique solution:   
\begin{equation}
\label{u}
    u(x,\alpha)=\int_{-1}^1 G(x-y) f (y)dy,
\end{equation}
where $G(x-y)$ is the Green function given as follows
\begin{equation}
\label{GreenF}
    G(x-y)=\frac{i}{2}\frac{e^{ik|x-y|}}{k}.
\end{equation}
This article concerns the inverse source problem when the source function $f$ is compactly supported in the interval $(-1,1)$. This paper aims to recover the radiated source $f$ using the boundary measurement $u(1,\alpha )$ and $u(-1, \alpha)$ with $\alpha \in (0,k) $ where $K>1$ is a positive real constant. \\
Motivated by these significant applications, inverse source problems have attracted many researchers from whole part of the world in many areas of science. It has vast applications in acoustical and biomedical/medical sciences, antenna synthesis, geophysics, Elastography and material science (\cite{ABF, B}). It has been known that the data of the inverse source problems for the time harmonic wave equation with a single frequency can not guarantee the uniqueness (\cite{I}, Ch.4). Although, Many studies showed that the uniqueness can be regained by taking many frequency boundary measurement data in a non-empty interval $(0,K)$ noticing the analyticity of wave-field on the frequency \cite{I, J}. Due to the significant applications, these problems have been attracted considerable attention. These kinds of problems have been extensively investigated by many researchers such as In the paper \cite{AI, BLT1,BLRX,  CIL, E, EI, EG, IK,IL,IL1, J2, YL} and \cite{LY}.  It is worth mentioning that this type of problems has also application in important system such classical elasticity system and Maxwell system. For an example, in \cite{EI2, BLZ}, inverse source problems was considered for classical elasticity system with constant coefficients.\\
In this paper, we assume that the domain is homogeneous in the whole space. In this work, we try to establish an estimate to
recover the source functions for the inverse source problem for the one-dimensional Helmholtz equation. In this work, paper source function $f \in H^2((-1,1))$ is assumed to has a support in the domain or  $suppf \subset (-1,1) $. Our main result is as follows;

\begin{theorem}
\label{maintheorem}
Let's $u\in H^2 ((-1,1))$ be the solution of the \eqref{PDE}, Then there exists a generic constant $C$ depending on the domain $(-1,1) $ such that 

\begin{equation}
\label{Istability}
\parallel f\parallel_{(0)}^{2}(-1,1) \leq   C\Big(\epsilon ^2+\frac{M^{2}}{1+K^{\frac{2}{3}}E^{\frac{1}{4}}} \Big), 
\end{equation}
where $K>1$. Here 

\begin{equation*}
\label{epsilon}
\epsilon ^2  = \int _{0} ^{K} \alpha ^2\big(  | u(1,\alpha) |^{2} + | u(-1,\alpha) |^{2} \big ) d\alpha,
\end{equation*}
$E=-ln\epsilon $ and $M = \max \big \{ \parallel f \parallel_{(1)}^2(-1,1) , 1 \big\}$ where $\parallel . \parallel _{(l)}(\Omega)$ is the standard Sobolev norm in $H^l(\alpha)$.
\end{theorem}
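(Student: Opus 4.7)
The plan is to reduce the problem to a Fourier--Plancherel identity for $\hat{f}$ and to exploit the fact that $\hat{f}$ is entire of exponential type $1$ (by Paley--Wiener, since $\mathrm{supp}\, f\subset(-1,1)$) in order to transfer the boundary data from the interval $(0,K)$ to a larger low--frequency band via a quantitative analytic continuation. First I would substitute $x=\pm 1$ in the representation \eqref{u}: since $|1-y|=1-y$ and $|{-1}-y|=1+y$ on $(-1,1)$, the Green's function \eqref{GreenF} gives
\begin{equation*}
2\alpha\, u(1,\alpha) = i e^{i\alpha}\hat{f}(\alpha), \qquad 2\alpha\, u(-1,\alpha) = i e^{i\alpha}\hat{f}(-\alpha),
\end{equation*}
where $\hat{f}(\alpha):=\int_{-1}^{1} e^{-i\alpha y}f(y)\,dy$ is the (extended) Fourier transform of $f$. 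Thus the data functional satisfies $4\epsilon^{2}=\int_{-K}^{K}|\hat{f}(\alpha)|^{2}\,d\alpha$.

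Next I would write, via Plancherel, $2\pi\|f\|_{(0)}^{2}(-1,1)=\int_{\mathbb{R}}|\hat{f}(\alpha)|^{2}\,d\alpha$ and split the integral into three pieces at cut--offs $K<N$: the low piece $|\alpha|\le K$, the intermediate piece $K<|\alpha|\le N$, and the tail $|\alpha|>N$. The low piece is immediately controlled by $C\epsilon^{2}$ from the identity above. For the tail I would use the $H^{1}$--regularity assumption built into $M$: since $(1+\alpha^{2})|\hat{f}(\alpha)|^{2}$ integrates to the $H^{1}$--norm squared of $f$, we get
\begin{equation*}
\int_{|\alpha|>N}|\hat{f}(\alpha)|^{2}\,d\alpha \le \frac{\|f\|_{(1)}^{2}(-1,1)}{N^{2}} \le \frac{M^{2}}{N^{2}}.
\end{equation*}

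The intermediate piece is where the real work lies and is the main obstacle. Here $\hat{f}$ is entire of exponential type $1$ with $|\hat{f}(z)|\le \sqrt{2}\,\|f\|_{(0)}\,e^{|\mathrm{Im}\,z|}$ (Paley--Wiener), while on $(-K,K)$ it is small in $L^{2}$ by the data bound. I would apply a Hadamard three--lines / harmonic measure argument on a thin rectangle (or a disk/ellipse) centered on the real axis whose intersection with $\mathbb{R}$ covers $(-K,K)$, to obtain a pointwise interpolation estimate of the form $|\hat{f}(\alpha)|^{2}\le C\,M^{2(1-\mu(\alpha))}\bigl(\epsilon^{2}\bigr)^{\mu(\alpha)}$ for $K<|\alpha|\le N$, with $\mu(\alpha)$ decreasing as $\alpha$ leaves $(-K,K)$. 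Integrating and using $\epsilon^{\mu}=e^{-\mu E}$, the worst point $|\alpha|=N$ dominates and yields, after some algebra,
\begin{equation*}
\int_{K<|\alpha|\le N}|\hat{f}(\alpha)|^{2}\,d\alpha \le C\,N\,M^{2}\,e^{-c\,\mu(N)\,E}.
\end{equation*}

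Finally I would optimize $N$ in $K$ and $E$ so that the tail bound $M^{2}/N^{2}$ balances against the intermediate bound. The polynomial--type decay in the denominator comes from choosing $N$ of order $K^{1/3}E^{1/8}$ (which, when squared, gives the $K^{2/3}E^{1/4}$ factor claimed in \eqref{Istability}); the small‐frequency piece absorbs into $C\epsilon^{2}$, and adding $1$ in the denominator handles the regime where $E$ or $K$ is moderate. The hard step will be calibrating the harmonic--measure estimate so that $\mu(N)\,E$ is exactly large enough to beat the factor $N$ coming from integration, without losing the Paley--Wiener growth $e^{|\mathrm{Im}\,z|}$; this dictates the exact exponents $2/3$ and $1/4$ appearing in the main theorem.
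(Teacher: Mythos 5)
Your reduction to the Fourier transform is sound and, in fact, cleaner than the paper's bookkeeping: the identities $2\alpha u(1,\alpha)=ie^{i\alpha}\hat f(\alpha)$ and $2\alpha u(-1,\alpha)=ie^{i\alpha}\hat f(-\alpha)$ are correct, so $4\epsilon^2=\int_{-K}^{K}|\hat f|^2$, Plancherel gives the $L^2$ norm of $f$, and your tail bound $\int_{|\alpha|>N}|\hat f|^2\le CM^2/N^2$ is even sharper than the paper's Lemma \ref{BoundK-Infty} (which only gets $1/k$ decay). Your route also differs genuinely from the paper's: you propose a three-way frequency split and a pointwise analytic continuation of $\hat f$ itself, whereas the paper continues the \emph{cumulative} data integrals $I_1(k),I_2(k)$ as analytic functions of the upper limit $k$ over the sector $\mathbb{S}$, uses the harmonic-measure bound of Lemma 2.2 to control $I(k)e^{-2k}$ for $k>K$, and then splits only once at $k=K^{2/3}E^{1/4}$.

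However, as written the argument has two genuine gaps, both in the intermediate region, which is where the entire content of the theorem lies. First, the interpolation estimate $|\hat f(\alpha)|^2\le CM^{2(1-\mu)}(\epsilon^2)^{\mu}$ does not follow from the two-constants theorem with the data you have: that theorem requires a \emph{sup-norm} bound on the small set, while the measurement only gives $\|\hat f\|_{L^2(-K,K)}\le 2\epsilon$. This is exactly the difficulty the paper's device of continuing $\int_0^k$ in the upper limit is designed to avoid. The gap is repairable -- e.g., $|\hat f'|\le\|f\|_{L^1}\le CM$ converts the $L^2$ bound into $\sup_{(-K,K)}|\hat f|\le C(M\epsilon^2)^{1/3}$, and the lost power of $\epsilon$ washes out in the logarithm -- but some such step must be supplied. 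Second, you defer precisely the quantitative core: the lower bound for the harmonic measure of $(-K,K)$ in your continuation domain and its calibration against the Paley--Wiener growth $e^{|\mathrm{Im}\,z|}$ (equivalently, against the factor $e^{2k}$ that the paper removes by normalizing $I(k)e^{-2k}$). Without specifying the domain (its height must be taken comparable to $N$, or the harmonic measure degenerates exponentially) and verifying that $\mu(N)E$ dominates both that exponential growth and the factor $N$ from integration, the exponents $2/3$ and $1/4$ are not derived but merely asserted. In the paper this is done explicitly via Lemma 2.2 ($\mu(k)\gtrsim(K/k)^{2}$) together with the choice \eqref{k} and the elementary inequality $e^{-t}\le 6t^{-3}$. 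So the proposal is a plausible alternative outline, but it does not yet constitute a proof.
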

\begin{center}
\begin{equation*}
f_1 = \begin{cases}
            	f & \mbox{if } x > 0, \\
0 & \mbox{if } x < 0,
       \end{cases} \quad
f_{2} = \begin{cases}
            0  & \text{if } x > 0, \\
            f & \text{if } x < 0.
       \end{cases}
\end{equation*}

 \end{center}
 {\bf Remark 1.1: } Our main estimate in \eqref{Istability} consists of two parts: the data discrepancy and the high frequency part. The first part is  the Lipschitz part comes form the boundary measurement data  type. The second part is of logarithmic type. Our results shows when the wave number K grows, the problem is more stable.  The estimate \eqref{Istability} also implies the uniqueness of the inverse source problem as the norm of data $\epsilon \rightarrow0$.

\section{Increasing Stability for the Inverse source Problem}
Let 's define 

\begin{equation*}
    I(k)=I_1(k)+I_2(k)
\end{equation*}
where 
\begin{equation}
\label{I1-I2}
I_1(k)= \int _{0} ^{k} \alpha ^2 | u(-1,\alpha) |^{2} d\alpha, \qquad  I_2 (k)= \int _{0}^{k}  \alpha ^2 | u(1,\alpha) |^{2} d\alpha,
\end{equation}
using \eqref{u} and a simple calculation shows that 
\begin{equation}
\label{alphau1u-1}
    \alpha u(1,\alpha)=\int_{0}^{1}\frac{i}{2}e^{i\alpha(1- y)}  f_1(y)dy,  \qquad   \alpha u(-1,\alpha)=\int_{-1}^{0}\frac{i}{2}e^{i\alpha (-1- y)}  f_2(y)dy, 
\end{equation}
where $y \in (-1,1)$. Functions $I_1$ and $I_2$ are both analytic with respect to the wave number $k \in \mathbb{C} $ and play important roles in relating the inverse source problems of the Helmholtz equation and the Cauchy problems for the wave equations. \\
\begin{lemma}
Let $supp f \in (-1,1)$ and $f \in H^1 (-1,1) $. Then 
\begin{equation}
\label{boundI1}
|I_1 (k)|\leq C\Big( |k| \parallel f \parallel_{(0)}^2 (-1,1)  \Big)e^{2|k_2|},
\end{equation}
\begin{equation}
\label{boundI2}
|I_2(k)|\leq C\Big( |k| \parallel f \parallel_{(0)}^2 (-1,1)  \Big)e^{2|k_2|}.
\end{equation}

\end{lemma}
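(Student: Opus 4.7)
The plan is to leverage the Fourier-integral representations in \eqref{alphau1u-1} to reinterpret the real quantities $\alpha^2|u(\pm 1,\alpha)|^2$ as restrictions of entire functions of $\alpha$, and then estimate the resulting analytic extensions of $I_1(k),I_2(k)$ along the straight-line path from $0$ to $k$ in $\mathbb{C}$. Because $|u|^2$ is not itself analytic in $k$, this reformulation is essential before any contour argument can even be set up.

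First, I set $h_+(\alpha):=\alpha u(1,\alpha)$ and $h_-(\alpha):=\alpha u(-1,\alpha)$, each of which is entire in $\alpha\in\mathbb{C}$ by \eqref{alphau1u-1}. Assuming $f$ is real-valued (as is standard for this inverse source problem), a direct computation on \eqref{alphau1u-1} gives the symmetry $\overline{h_\pm(\alpha)}=-h_\pm(-\alpha)$ for real $\alpha$, so that
\begin{equation*}
\alpha^2|u(\pm 1,\alpha)|^2 \;=\; -\,h_\pm(\alpha)\,h_\pm(-\alpha) \qquad (\alpha\in\mathbb{R}).
\end{equation*}
The right-hand side is entire in $\alpha$, so the assignment $I_j(k):=-\int_0^k h_\pm(\alpha)h_\pm(-\alpha)\,d\alpha$ defines an entire extension that agrees with the original $I_j$ on the real axis.

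Second, I parametrize the segment of integration by $\alpha=kt$, $t\in[0,1]$, which turns the complex integral into $I_j(k)=-k\int_0^1 h_\pm(kt)h_\pm(-kt)\,dt$. Writing $k=k_1+ik_2$ and bounding the integrands in \eqref{alphau1u-1} by pulling the absolute value inside, each factor satisfies
\begin{equation*}
|h_\pm(\pm kt)| \;\le\; \tfrac{1}{2}\int |f_{1,2}(y)|\, e^{|k_2|\,t\,\ell(y)}\,dy,
\end{equation*}
where $\ell(y)\in[0,1]$ is the linear factor appearing in the exponent on the relevant half-interval. The crucial point is that because $\operatorname{supp} f_1\subset(0,1)$ and $\operatorname{supp} f_2\subset(-1,0)$, the factor $\ell(y)$ stays bounded by $1$ rather than $2$; this is what ultimately yields $e^{2|k_2|}$ in \eqref{boundI1}--\eqref{boundI2} rather than $e^{4|k_2|}$. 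A single Cauchy--Schwarz on an interval of unit length then converts the $L^1$ norm into $\|f_{1,2}\|_{L^2}\le\|f\|_{L^2(-1,1)}$.

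Finally, multiplying the two factor bounds and integrating in $t$ gives
\begin{equation*}
|I_j(k)| \;\le\; \frac{|k|\,\|f\|_{(0)}^{2}(-1,1)}{4}\int_0^1 e^{2|k_2|\,t}\,dt \;\le\; C\,|k|\,\|f\|_{(0)}^{2}(-1,1)\,e^{2|k_2|},
\end{equation*}
which is precisely the asserted estimate. I expect the main obstacle to be the analytic-continuation step, i.e.\ using the reality of $f$ to identify $|\,\cdot\,|^2$ on the real line with an entire product $-h_\pm(\alpha)h_\pm(-\alpha)$ and then justifying deformation of the contour into $\mathbb{C}$; once that identification is in place, the exponential bookkeeping on the $t$-parametrized straight line is essentially routine.
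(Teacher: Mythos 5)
Your proof is correct and follows essentially the same route as the paper: analytic continuation of the integrand in $k$, the substitution $\alpha=kt$ along the segment from $0$ to $k$, exponential bounds exploiting $\mathrm{supp}\,f_1\subset(0,1)$ and $\mathrm{supp}\,f_2\subset(-1,0)$, and Cauchy--Schwarz. If anything you are more careful than the paper on the two delicate points: you exhibit the entire extension of $\alpha^2|u(\pm1,\alpha)|^2$ explicitly as $-h_\pm(\alpha)h_\pm(-\alpha)$ (for complex-valued $f$ one would take $h_\pm(\alpha)\overline{h_\pm(\overline{\alpha})}$ instead, with identical bounds), whereas the paper keeps the non-analytic $|\cdot|^2$ after complexifying $k$; and your factor-by-factor bookkeeping with $\ell(y)\in[0,1]$ correctly yields $e^{2|k_2|}$ for the product, while the paper's stated bound $e^{2|k_2|}$ on each single exponential factor would literally give $e^{4|k_2|}$ after squaring.
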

\begin{proof}   

Since we have $k= k_1 +k_2 i$ is complex analytic on the set $\mathbb{S}\setminus [0,k]$, where $\mathbb{S}$ is the sector $S=\{ k \in \mathbb{C}:|$arg$\, k| < \frac{\pi}{4}    \}$ with $k=k_1 +ik_2$. Since the integrands in \eqref{I1-I2} are analytic functions of $k$ in $ \mathbb{S}$, their integrals with respect to $\alpha$ can be taken over any path in $\mathbb{S}$ joining points $0$ and $k$ in the complex plane. Using the change of variable $\alpha=ks$, $s\in (0,1)$ in the line integral \eqref{u}, the fact that $y\in(-1,1)$.
\begin{equation}
\label{I1}
I_1 (k)= \int _{0} ^{1} ks \big | \int_{0}^{1}\frac{1}{2}e^{i(ks)(1- y)}  f_1(y)dy   \big |^2ds,
\end{equation}
and 
\begin{equation}
\label{I2}
    I_2 (k)= \int _{0} ^{1}ks \big | \int_{-1}^{0}\frac{1}{2}e^{i(ks)(-1- y)}  f_2(y)dy   \big |^2ds.
\end{equation}
Noting
\begin{equation*}
    |e^{ iks (-1- y)}|\leq e^{2|k_2|}, \quad |e^{ iks (1- y)}|\leq e^{2|k_2|},
\end{equation*}                                                                                                                                                                                                                                                     
by the Cauchy-Schwartz inequality and  integrating with respect to $s$ and using the bound for $|k|$ in $\mathbb{S}$, the proof of \eqref{boundI1} is complete. Using the similar technique, the  prove the \eqref{boundI2} is straight forward. 

\end{proof}
Note that the functions $I_1(k), I_2(k)$ are analytic functions of wave number$k=k_1+ik_2 \in \mathbb{S}$ and $|k_2|\leq k_1$. The following steps are important to connect the unknown $I_1(k)$ and $I_2(k)$ for $k\in [K,\infty)$ to the known value $\epsilon$ in \eqref{PDE}.\\ Clearly

\begin{equation}
\label{Ie^}
|I_1(k) e^{-2 k}|\leq C\Big( |k_1| \parallel f \parallel_{(0)}^2 (-1,1)\Big)e^{-2 k_1} \leq C M^{2},
\end{equation}
where $M=\max \big\{\parallel f\parallel_{(0)}^2  (-1,1) , 1 \big \}$. With the similar argument bound \eqref{Ie^} is true for $I_2 (k)$.\\
Noting that 
\begin{equation*}
|I_1(k)  e^{-2 k}|\leq \epsilon ^2, \quad |I_2(k)  e^{-2 k}|\leq \epsilon ^2  \textit{ on } [0, K].
\end{equation*}
Defining $\mu (k) $ be the harmonic measure of the interval $[0,K]$ in $\mathbb{S}\backslash [0,K], $ then as known (for example see \cite{I}, p.67), from two previous estimates and analyticity of the functions $I_{1}(k) e^{-2 k}$ and $ I_{2}(k) e^{-2 k} $, we have that 
\begin{equation}
\label{I1Epsilon}
|I_1(k) e^{-2 k}|\leq  C\epsilon ^{2 \mu(k)} M^{2},
\end{equation}
when $K<k< +\infty$. Similarly it is easy to see that 
\begin{equation}
\label{I2Epsilon}
|I_2(k) e^{-2k}|\leq  C\epsilon ^{2 \mu(k)} M^{2},
\end{equation}
consequently
\begin{equation}
\label{I2Epsilon}
|I(k) e^{-2 k}|\leq  C\epsilon ^{2 \mu(k)} M^{2}.
\end{equation}

To obtain the lower bound for the harmonic measure $\mu (k)$, we use the following technical lemma. The proof is  in \cite{CIL}.
\begin{lemma}
Let $\mu (k) $ be the harmonic measure function of the interval $[0,K]$ in $\mathbb{S}\backslash [0,K]$, then 
\begin{equation}
 \begin{cases}
            \frac{1}{2}\leq \mu (k), & \mbox{if } \quad  0<k<2^{\frac{1}{4}}K, \\
\frac{1}{\pi} \Big( \big ( \frac{k}{K} \big)^{4} -1 \Big)^{\frac{-1}{2}} \leq \mu(k), & \mbox{if } \quad 
 2^ {\frac{1}{4}}  K <k .
       \end{cases} 
\end{equation}
\end{lemma}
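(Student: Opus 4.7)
The plan is to use the conformal invariance of harmonic measure to reduce the problem to an interval–in–a–half-plane computation, for which an explicit formula is available. Since the sector $\mathbb{S}$ has opening angle $\pi/2$, the power map $k\mapsto k^{4}$ carries $\mathbb{S}$ conformally onto the slit plane $\mathbb{C}\setminus(-\infty,0]$, sending the cut $[0,K]$ to $[0,K^{4}]$, so $\mathbb{S}\setminus[0,K]$ is carried onto $\mathbb{C}\setminus(-\infty,K^{4}]$, the complement of a single ray. The principal branch of $\zeta\mapsto\sqrt{\zeta-K^{4}}$ then maps this onto the right half-plane $\{\operatorname{Re}w>0\}$. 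Composing, I would work with
\[
\Phi(k)\;=\;\sqrt{k^{4}-K^{4}},
\]
which is a conformal isomorphism of $\mathbb{S}\setminus[0,K]$ onto $\{\operatorname{Re}w>0\}$.

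A boundary analysis then shows that the two sides of the slit $[0,K]$ (reached by approaching the positive real axis from above or below inside $\mathbb{S}$) map respectively onto the imaginary segments $[0,iK^{2}]$ and $[0,-iK^{2}]$, while the bounding rays $\arg k=\pm\pi/4$ correspond to the remaining half-lines $[iK^{2},i\infty)$ and $[-iK^{2},-i\infty)$. Hence $\Phi$ sends the slit exactly to $[-iK^{2},iK^{2}]$. By conformal invariance, $\mu(k)$ equals the harmonic measure of $[-iK^{2},iK^{2}]$ in $\{\operatorname{Re}w>0\}$ at $w=\Phi(k)$; rotating by $\pi/2$ to apply the classical Poisson formula for a real interval in the upper half-plane, for real $k>K$ one obtains
\[
\mu(k)\;=\;\frac{2}{\pi}\arctan\frac{K^{2}}{\sqrt{k^{4}-K^{4}}}.
\]

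Both bounds then follow from elementary arctangent estimates. If $k<2^{1/4}K$, then $k^{4}-K^{4}<K^{4}$, the argument of the arctangent exceeds $1$, and $\mu(k)>\frac{2}{\pi}\cdot\frac{\pi}{4}=\frac{1}{2}$ (for $k$ actually lying on the slit one has $\mu(k)=1$ by the boundary values). If $k\ge2^{1/4}K$, the argument lies in $(0,1]$, and the concavity of $\arctan$ on $[0,1]$ yields the chord estimate $\arctan(x)\ge(\pi/4)x$, so
\[
\mu(k)\;\ge\;\frac{K^{2}}{2\sqrt{k^{4}-K^{4}}}\;\ge\;\frac{1}{\pi}\bigl((k/K)^{4}-1\bigr)^{-1/2},
\]
as claimed. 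The main technical obstacle is the branch bookkeeping for $\Phi$, specifically checking that the two sides of the slit (which are distinct prime ends of $\mathbb{S}\setminus[0,K]$) together account for the full segment $[-iK^{2},iK^{2}]$, so that the half-plane formula really computes the harmonic measure of the slit and not of some other boundary component; once this is justified, what remains is only the Poisson formula and the two easy arctangent comparisons above.
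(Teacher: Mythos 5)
Your argument is correct. Note first that the paper does not actually prove this lemma: it states it and defers entirely to the reference [CIL] (Cheng--Isakov--Lu, \emph{J. Differential Equations} 260 (2016)), so there is no in-paper proof to compare against. What you have written is a correct, self-contained reconstruction of the standard argument used in that reference and its relatives: the fourth-power map sends the $\pi/2$-opening sector $\mathbb{S}$ onto the slit plane, $\Phi(k)=\sqrt{k^4-K^4}$ then takes $\mathbb{S}\setminus[0,K]$ conformally onto the right half-plane with both sides of the slit accounting exactly for the segment $[-iK^2,iK^2]$, and the half-plane angle formula gives, for real $k>K$,
\[
\mu(k)=\frac{2}{\pi}\arctan\frac{K^2}{\sqrt{k^4-K^4}}=\frac{2}{\pi}\arcsin\Bigl(\frac{K^2}{k^2}\Bigr),
\]
the second form being the one usually quoted and obtainable from yours via $\arctan\bigl(a/\sqrt{b^2-a^2}\bigr)=\arcsin(a/b)$. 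Your two endgame estimates are sound: for $K<k<2^{1/4}K$ the argument of the arctangent exceeds $1$, giving $\mu>1/2$ (and for $0<k\le K$ the point lies on the slit, where the boundary value is $1$); for $k\ge 2^{1/4}K$ the chord inequality $\arctan x\ge(\pi/4)x$ on $[0,1]$ yields $\mu(k)\ge\tfrac12\bigl((k/K)^4-1\bigr)^{-1/2}\ge\tfrac1\pi\bigl((k/K)^4-1\bigr)^{-1/2}$. The one point you flag as delicate --- the prime-end bookkeeping showing that the two sides of the slit together map onto all of $[-iK^2,iK^2]$ --- is handled correctly by your boundary correspondence ($\arg k=0^{\pm}$ mapping to the upper/lower edges of $[0,K^4]$ and thence to $[0,\pm iK^2]$), so there is no gap.
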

\vspace{0.5cm}

\begin{lemma}
Let external source function $f\in L^2 (-1,1)$ where $supp f \subset (-1,1)$, then 
\begin{equation*}
\parallel f\parallel^{2} _{(0)} (-1,1) \leq C \int_{0}^{\infty} \alpha ^2 \big ( |u(-1,\alpha)|^2 + |u(1,\alpha)|^2 \big)d\alpha.
\end{equation*}
\end{lemma}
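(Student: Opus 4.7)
The plan is to identify $\alpha u(\pm 1,\alpha)$ as Fourier transforms of $f_1$ and $f_2$, and then apply Plancherel's identity.

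The first step is to massage the representation formulas in (2.2) into Fourier form. Since $\mathrm{supp}\, f \subset (-1,1)$, the functions $f_1$ and $f_2$ may be extended by zero to all of $\mathbb{R}$, with supports contained in $(0,1)$ and $(-1,0)$ respectively. Pulling the $y$-independent factor $e^{\pm i \alpha}$ out of each integral in (2.2), I obtain
\begin{equation*}
\alpha u(1,\alpha) \;=\; \tfrac{i}{2} e^{i\alpha}\,\widehat{f_1}(\alpha),
\qquad
\alpha u(-1,\alpha) \;=\; \tfrac{i}{2} e^{-i\alpha}\,\widehat{f_2}(\alpha),
\end{equation*}
where $\widehat{f_j}(\alpha) = \int_{\mathbb{R}} e^{-i\alpha y} f_j(y)\,dy$ is the usual Fourier transform. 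Taking moduli gives $\alpha^2 |u(\pm 1,\alpha)|^2 = \tfrac{1}{4} |\widehat{f_j}(\alpha)|^2$, which removes the phases entirely.

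The second step is Plancherel: $\|f_j\|_{L^2(\mathbb{R})}^2 = \frac{1}{2\pi} \int_{\mathbb{R}} |\widehat{f_j}(\alpha)|^2\,d\alpha$. Because $f$ is real-valued (as is standard for inverse source problems and implicit in the physical setup), each $\widehat{f_j}$ satisfies the Hermitian symmetry $\widehat{f_j}(-\alpha) = \overline{\widehat{f_j}(\alpha)}$, so $|\widehat{f_j}|^2$ is even in $\alpha$ and the integral over $\mathbb{R}$ is exactly twice the integral over $(0,\infty)$. Finally, because $f_1$ and $f_2$ are supported on disjoint half-intervals and sum to $f$ on $(-1,1)$, one has $\|f\|_{(0)}^2(-1,1) = \|f_1\|_{L^2(\mathbb{R})}^2 + \|f_2\|_{L^2(\mathbb{R})}^2$. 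Stringing these identities together produces
\begin{equation*}
\|f\|_{(0)}^2(-1,1) \;=\; \frac{4}{\pi}\int_0^{\infty} \alpha^2 \bigl(|u(1,\alpha)|^2 + |u(-1,\alpha)|^2\bigr)\,d\alpha,
\end{equation*}
which is the desired bound with $C = 4/\pi$.

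The only delicate point is the reduction from a two-sided Plancherel integral to the one-sided integral over $(0,\infty)$; this is where the reality of $f$ (or, equivalently, the Hermitian symmetry of $\widehat{f_j}$) is used. Everything else is an algebraic rearrangement of (2.2) together with the disjoint-support decomposition $f = f_1 + f_2$.
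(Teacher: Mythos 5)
Your Plancherel argument is sound and, importantly, it is a proof the paper does not actually contain: the paper's entire justification of this lemma is the sentence ``Using the result of the paper \cite{YL} by applying the Green function \eqref{GreenF} and letting $k_1=k_2=k$,'' i.e.\ a citation with no argument. What you have written is essentially the computation that citation stands in for, so in substance you have filled a gap rather than taken a different road. Two caveats are worth recording. First, as you note, collapsing $\int_{\mathbb R}|\widehat{f_j}|^2$ to $2\int_0^\infty|\widehat{f_j}|^2$ requires $f$ to be real-valued; the paper never states this hypothesis, and without it the one-sided integral can genuinely fail to control $\|f_j\|^2$ (take $f_1(y)=e^{-iNy}\chi_{(0,1)}(y)$ with $N$ large, whose transform concentrates near $\alpha=-N$), so both your constant $4/\pi$ and the lemma itself, as you derive it, depend on that assumption. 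Second, you take the paper's display \eqref{alphau1u-1} at face value, but a direct computation from \eqref{u} and \eqref{GreenF} gives
\begin{equation*}
\alpha u(1,\alpha)=\tfrac i2 e^{i\alpha}\,\widehat f(\alpha),\qquad \alpha u(-1,\alpha)=\tfrac i2 e^{i\alpha}\,\widehat f(-\alpha),
\end{equation*}
with the \emph{whole} source $f$ appearing in both boundary values; \eqref{alphau1u-1} silently discards the contribution of $f_2$ to $u(1,\alpha)$ and of $f_1$ to $u(-1,\alpha)$. If you run your argument on these corrected identities it becomes both cleaner and stronger: the two endpoints hand you $\widehat f$ on the positive and negative half-lines respectively, the integral over $(0,\infty)$ reassembles the full two-sided Plancherel integral, the reality assumption disappears, and the lemma becomes the exact identity $\|f\|_{(0)}^2(-1,1)=\tfrac2\pi\int_0^\infty\alpha^2\bigl(|u(1,\alpha)|^2+|u(-1,\alpha)|^2\bigr)\,d\alpha$. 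Either way your approach is the right one and is more informative than what the paper provides.
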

\begin{proof}
Using the result of the paper  \cite{YL} by applying the Green function \eqref{GreenF} and letting $k_1=k_2=k$.
\end{proof}

\begin{lemma}
Let source function $f\in L^2 (-1,1)$, then 
\begin{equation*}
    \alpha^2 |u(-1,\alpha)|^2  \leq C \Big | \int _{-1}^{0}e^{2\alpha y } f_2 (y)dy\Big |^ 2
\end{equation*}
\begin{equation*}
     \alpha^2  |u(1,\alpha)|^2 \leq C \Big | \int _{0}^{1}e^{2\alpha y } f_1 (y)dy\Big |^ 2
\end{equation*}
\end{lemma}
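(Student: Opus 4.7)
The starting point is the explicit representation \eqref{alphau1u-1}. Taking modulus squared yields
\[
\alpha^2 |u(-1,\alpha)|^2 \;=\; \tfrac14 \Bigl|\int_{-1}^{0} e^{i\alpha(-1-y)} f_2(y)\,dy\Bigr|^2,
\]
and an entirely analogous expression for $\alpha^2|u(1,\alpha)|^2$. The lemma therefore reduces to dominating an oscillatory Fourier-type integral by a real exponential (Laplace-type) integral over the same interval.

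My plan is to regard both sides as specializations of the entire function
\[
F_2(\sigma) \;:=\; \int_{-1}^{0} e^{\sigma y} f_2(y)\,dy,
\]
which is well defined on all of $\mathbb{C}$ because $f_2$ is compactly supported in $[-1,0]$. After factoring out the unimodular phase $e^{-i\alpha}$, the left-hand side integral equals $F_2(-i\alpha)$, while the right-hand side integral equals $F_2(2\alpha)$. I would then invoke a Phragm\'en--Lindel\"of-type principle on the sector $\{\sigma\in\mathbb{C}:\arg\sigma\in[-\pi/2,0]\}$, combined with the Paley--Wiener-type estimate $|F_2(\sigma)| \leq \|f_2\|_{L^2}\,e^{|\mathrm{Re}\,\sigma|}$ that follows from Cauchy--Schwarz and the support condition, to transfer control of $F_2$ on the negative imaginary axis $\{\sigma=-i\alpha\}$ to control on the positive real axis $\{\sigma=2\alpha\}$. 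The bound for $\alpha^2|u(1,\alpha)|^2$ is then obtained by the identical argument applied to $F_1(\sigma) := \int_{0}^{1} e^{\sigma y} f_1(y)\,dy$, using the symmetry $y \mapsto -y$ which interchanges the roles of the two half-intervals.

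The main obstacle I anticipate is producing a comparison constant $C$ that is independent of $\alpha$. A direct application of Phragm\'en--Lindel\"of in a sector that effectively grows with $\alpha$ can introduce $\alpha$-dependent prefactors; the argument therefore needs to be organized so that the polynomial factor $\alpha^2$ on the left-hand side absorbs any such growth, or alternatively so that the comparison is split into small-$\alpha$ and large-$\alpha$ regimes (with the small-$\alpha$ regime handled by Cauchy--Schwarz directly and the large-$\alpha$ regime by the complex-analytic bound). Verifying that these two regimes match up with a uniform constant is the technical heart of the proof.
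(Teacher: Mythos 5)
Your starting point coincides with the paper's: the paper's entire proof of this lemma is the one\--line remark that the bounds ``follow from \eqref{alphau1u-1} and $y\in(-1,1)$'', i.e.\ it reads the estimate directly off the representation $\alpha u(-1,\alpha)=\tfrac{i}{2}\int_{-1}^{0}e^{i\alpha(-1-y)}f_2(y)\,dy$ using only $|e^{i\alpha(-1-y)}|=1$ and the boundedness of the interval; no complex analysis enters. The Phragm\'en--Lindel\"of plan you build on top of that, however, contains a gap that cannot be repaired. Maximum\--principle and Phragm\'en--Lindel\"of arguments bound an analytic function \emph{inside} a region by its supremum over the \emph{boundary} of that region; they never dominate the modulus of the function at one point by its modulus at a single other point. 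The comparison you need, $|F_2(-i\alpha)|\le C\,|F_2(2\alpha)|$, is exactly such a point\--to\--point domination, and it does not follow from any combination of Phragm\'en--Lindel\"of with the Paley--Wiener bound $|F_2(\sigma)|\le \|f_2\|_{L^2}e^{|\mathrm{Re}\,\sigma|}$.

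Worse, the comparison is false with a constant independent of $f$ and $\alpha$, so the small\--$\alpha$/large\--$\alpha$ splitting you propose cannot rescue it. Fix $\alpha_0>0$ and choose $f_2\in L^2(-1,0)$ orthogonal to the positive function $e^{2\alpha_0 y}$ but not to $e^{i\alpha_0(-1-y)}$; such $f_2$ exists because the two kernels are not proportional (one has constant modulus, the other does not). Then the right\--hand side $\bigl|\int_{-1}^{0}e^{2\alpha_0 y}f_2(y)\,dy\bigr|^2$ vanishes while $\alpha_0^2|u(-1,\alpha_0)|^2=\tfrac14\bigl|\int_{-1}^{0}e^{i\alpha_0(-1-y)}f_2(y)\,dy\bigr|^2$ does not. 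What the paper's direct route actually yields from $|e^{i\alpha(-1-y)}|=1$ and Cauchy--Schwarz is $\alpha^2|u(-1,\alpha)|^2\le C\bigl(\int_{-1}^{0}|f_2|\bigr)^2\le C\|f_2\|^2_{L^2(-1,0)}$, i.e.\ a bound with $|f_2|$ under the integral sign and without the decaying weight $e^{2\alpha y}$. If you want a defensible statement, prove that elementary bound (and its mirror image for $u(1,\alpha)$) and note that the inequality as printed requires either an absolute value inside the right\--hand integral or a reinterpretation; reaching for Phragm\'en--Lindel\"of here is the wrong tool.
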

\begin{proof}
It follows from \eqref{alphau1u-1} and $y\in (-1,1)$.
\end{proof}

\section{Increasing stability for inverse source problem for the higher frequency }
To proceed the estimate for reminders in \eqref{I1} and \eqref{I2} for $(k, \infty)$, we need the following lemma.
\begin{lemma}
\label{BoundK-Infty}
Let $u$ be a solution to the forward problem \eqref{PDE} with $f_1 \in H^1(\alpha )$  with $supp f  \subset (-1,1)$, then 
\begin{equation}
\label{lemma4.1}
\int_{k}^{\infty} \alpha ^2 | u(-1,\alpha) |^2 d\alpha+ \int_{k}^{\infty}   \alpha ^2|u(1,\alpha) |^{2} d\alpha \leq C k ^{-1}\Big(\parallel f\parallel ^2  _{(1)} (-1,1) \Big) 
 \end{equation}
\end{lemma}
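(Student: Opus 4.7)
\medskip
\noindent
\textbf{Proof proposal.} The plan is to exploit the Fourier-type structure of the formulas in \eqref{alphau1u-1}: integrate by parts once in $y$ to trade a power of $\alpha$ for a derivative of $f$, then use Plancherel on the resulting integral and handle the boundary contribution by Sobolev trace. Since the argument at $x=1$ and $x=-1$ is symmetric, I would carry out the details for $u(1,\alpha)$ and indicate the obvious modifications for $u(-1,\alpha)$.

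\smallskip
\noindent
\emph{Step 1 (integration by parts).} Using $e^{i\alpha(1-y)} = \frac{i}{\alpha}\frac{d}{dy}e^{i\alpha(1-y)}$ in \eqref{alphau1u-1}, integrate by parts in $y$ on $(0,1)$. At $y=1$ the boundary term vanishes because $\operatorname{supp} f \subset (-1,1)$ forces $f(1)=0$. At $y=0$ one picks up $-\tfrac{i}{\alpha}e^{i\alpha}f(0)$, and the remaining integral produces $-\tfrac{i}{\alpha}\int_0^1 e^{i\alpha(1-y)}f'(y)\,dy$. The outcome is a representation of the form
\begin{equation*}
\alpha u(1,\alpha) \;=\; \frac{e^{i\alpha}f(0)}{2\alpha} \;+\; \frac{1}{2\alpha}\int_{0}^{1}e^{i\alpha(1-y)}f'(y)\,dy .
\end{equation*}
An analogous identity holds at $x=-1$ with $f_2$ and $f(0)$ replaced by the corresponding quantity on the left half.

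\smallskip
\noindent
\emph{Step 2 (tail estimate).} Squaring and applying the triangle inequality gives
\begin{equation*}
\alpha^{2}|u(1,\alpha)|^{2} \;\leq\; \frac{C}{\alpha^{2}}\Bigl(|f(0)|^{2} + \Bigl|\int_{0}^{1}e^{i\alpha(1-y)}f'(y)\,dy\Bigr|^{2}\Bigr).
\end{equation*}
Integrate over $[k,\infty)$. The boundary contribution is $|f(0)|^2\int_k^\infty \alpha^{-2}d\alpha = |f(0)|^2/k$, which by the one-dimensional Sobolev embedding $H^{1}(-1,1)\hookrightarrow C[-1,1]$ is bounded by $Ck^{-1}\|f\|_{(1)}^{2}(-1,1)$. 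For the second term, factor out $1/k^{2}\leq 1/k$ and extend the $\alpha$-integral to all of $\mathbb{R}$; the inner expression is (up to the unimodular factor $e^{i\alpha}$) the Fourier transform of $f'\chi_{(0,1)}$, so Plancherel yields
\begin{equation*}
\int_{-\infty}^{\infty}\Bigl|\int_{0}^{1}e^{i\alpha(1-y)}f'(y)\,dy\Bigr|^{2}d\alpha \;\leq\; C\|f'\|_{L^{2}(-1,1)}^{2} \;\leq\; C\|f\|_{(1)}^{2}(-1,1).
\end{equation*}
Combining both pieces gives the desired $k^{-1}$ bound for the $u(1,\alpha)$ tail; the $u(-1,\alpha)$ tail is handled identically.

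\smallskip
\noindent
\emph{Main obstacle.} The genuine difficulty is the boundary term at $y=0$: because the decomposition $f=f_1+f_2$ has a jump at the origin even when $f$ itself is smooth, the integration by parts does not telescope to zero as it would for a globally smooth, compactly supported $f$. Controlling this jump via the Sobolev trace $|f(0)|\leq C\|f\|_{(1)}(-1,1)$ is what forces the estimate \eqref{lemma4.1} to degrade from the $k^{-2}$ rate one would naively expect from Plancherel to the stated $k^{-1}$ rate; the rest of the argument is bookkeeping.
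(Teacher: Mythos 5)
Your proposal is correct and follows the same basic strategy as the paper's proof: integrate by parts in $y$ in the representation \eqref{alphau1u-1} to trade one power of $\alpha$ for a derivative of $f$, then integrate the resulting $O(\alpha^{-2})$ pointwise bound over $[k,\infty)$ to get the $k^{-1}$ decay. Two differences are worth recording. First, the paper drops \emph{both} boundary terms in the integration by parts, appealing to ``$\mathrm{supp}\, f_1\subset(0,1)$''; but $f_1=f\chi_{(0,1)}$ has $f_1(0^{+})=f(0)$, which need not vanish, so the term $e^{i\alpha}f(0)/(2\alpha)$ that you retain is genuinely present and the paper's identity $\int_0^1 e^{-i\alpha y}f_1\,dy=\frac{1}{i\alpha}\int_0^1 e^{-i\alpha y}\partial_y f_1\,dy$ is, as written, only valid when $f(0)=0$. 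Your handling of this term via the embedding $|f(0)|\le C\|f\|_{(1)}(-1,1)$, which contributes $|f(0)|^2\int_k^\infty\alpha^{-2}\,d\alpha=|f(0)|^2/k$, is exactly what is needed for the stated $k^{-1}$ rate to come out honestly; in this respect your argument repairs a gap in the paper's own proof, and your closing remark correctly identifies the jump of $f_1$ at the origin as the reason the rate is $k^{-1}$ rather than the $k^{-2}$ one would get for a globally smooth compactly supported density. Second, for the remaining integral the paper simply applies Cauchy--Schwarz in $y$ to get $\bigl|\int_0^1 e^{-i\alpha y}\partial_y f_1\,dy\bigr|^2\le C\|f'\|^2_{L^2}$ pointwise in $\alpha$ and then integrates $\alpha^{-2}$ over $[k,\infty)$, whereas you factor out $k^{-2}$ and invoke Plancherel; both yield the required bound (yours in fact gives $k^{-2}$ for that piece), with Cauchy--Schwarz being the more elementary choice. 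One small point to make explicit: your step $1/k^{2}\le 1/k$ uses $k\ge 1$, which does hold in the application since $k\ge K>1$.
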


\begin{proof}
Using \eqref{alphau1u-1}, we obtain
\begin{equation}
\label{Line1lemma4.1}
   \int _{k}^{\infty} \alpha ^2 | u(-1,\alpha) |^{2}d\alpha + \int _{k}^{\infty} \alpha ^2 | u(1,\alpha) |^{2}d\alpha 
\end{equation}

\begin{equation}
\label{Line2lemma4.1}
    \leq C \big ( \int _{k}^{\infty}\Big | \int _{0}^{1} e^{i \alpha y}  f_{1} (y)dy  \Big|^2d\alpha  + \int _{k}^{\infty}\Big | \int _{-1}^{0} e^{i \alpha y}  f_{2} (y)dy  \Big|^2d\alpha \Big ).
\end{equation}
By integration by parts and our assumption  $ supp f_1 \subset (0,1)  $ and $supp f_2 \subset (0,1) $, we derive
\begin{equation*}
\int _{0}^{1} e^{ -i \alpha y}  f _1(y)dy = \frac{1}{ i \alpha} \int _{0}^{1}e^{ -i\alpha y} ( \partial _ yf_{1}(y) )dy,  
\end{equation*}
and
\begin{equation*}
\int _{-1}^{0} e^{-i \alpha y}  f_{2} (y)dy = \frac{1}{i \alpha} \int _{-1}^{0}e^{ -i \alpha y}(\partial _ yf_{2} (y)  )dy, 
\end{equation*}
consequently for the first and second terms in \eqref{Line2lemma4.1} ,we have
\begin{equation*}
   \Big | \int _{0}^{1} e^{i \alpha y}  f_{1} (y)dy  \Big|^2 \leq  \frac{C}{\alpha^2}\parallel f_{1}\parallel^2 _{(1)} (0,1) \leq  \frac{C}{\alpha^2} \parallel f_{1}\parallel^2 _{(1)} (-1,1)
\end{equation*}
\begin{equation*}
    \leq  \frac{C}{\alpha^2} \parallel f\parallel^2 _{(1)} (-1,1),
\end{equation*}
utilizing the similar technique for the second term in \eqref{Line2lemma4.1} and integrating with respect to $\alpha$ the proof is complete.
\end{proof}
Finally, we are ready for the proof of the main Theorem \ref{maintheorem}.\\
\begin{proof}
without loss of generality, we can assume that $\epsilon <1$ and $3\pi E^{-\frac{1}{4}} <1$, otherwise the bound \eqref{maintheorem} is obvious. Let 's
\begin{center}
\begin{equation}
\label{k}
k= \begin{cases}
          K^{\frac{2}{3}}E^{\frac{1}{4}} \quad \text{if} \quad 2^{\frac{1}{4}}K^{\frac{1}{3}}< E ^{\frac{1}{4}} \\
K \hspace{1.19 cm} \text{if}\quad E ^{\frac{1}{4}} \leq 2^{\frac{1}{4}}K^{\frac{1}{3}},
       \end{cases} 
\end{equation}
 \end{center}
if $ E ^{\frac{1}{4}} \leq 2^{\frac{1}{4}}K^{\frac{1}{3}}$, then $k=K$, using the \eqref{I1Epsilon} and \eqref{I2Epsilon},  we can conclude 
\begin{equation}
\label{I11}
|I (k)| \leq 2\epsilon ^2.
 \end{equation}

If $2^{\frac{1}{4}}K^{\frac{1}{3}}< E ^{\frac{1}{4}}$, 
we can assume that $ E^{-\frac{1}{4}} <\frac{1}{4 \pi}$, otherwise $C<E$ and hence $K<C$ and the bound \eqref{Istability} is straightforward. From \eqref{k},  Lemma 2.2, \eqref{I1Epsilon} and the equality $\epsilon= \frac{1}{e^ E}$ we obtain
\begin{equation*}
\label{I1epsilon}
|I(k) |\leq  CM^{2} e^{4k}  e^{\frac{-2E}{\pi}\big( (\frac{k}{K})^4  -1 \big)^{\frac{-1}{2}}} 
\end{equation*}

\begin{equation*}
\leq CM^{2}  e^{- \frac{2}{\pi}K^{\frac{2}{3}}E^{\frac{1}{2}}(1- \frac{5\pi}{2}   E^{\frac{-1}{4}} )},
\end{equation*}
using the trivial inequality $e^{-t} \leq \frac{6}{t^3}$ for $t>0$ and our assumption at the beginning of the proof, we obtain 
\begin{equation}
\label{I12}
|I(k)|\leq  CM ^{2}  \frac{1}{K^2 E ^{\frac{3}{2}}\Big (1-\frac{5 \pi}{2} E^{-\frac{1}{4}}   \Big)^3}.  
\end{equation}
Due to the \eqref{I1}, \eqref{I11}, \eqref{I12}, and Lemma 2.5. we can conclude
\begin{equation}
\label{lastbound1}
\int ^{+\infty}_{0} \alpha ^2 |u(-1,\alpha)|^2 d\alpha + \int ^{+\infty}_{0} \alpha ^2 |u(1,\alpha)|^2 d\alpha
\end{equation}
\begin{equation*}
    \leq I(k)+  \int_{k}^{\infty} \alpha ^2 |u(-1,\alpha)|^2 d\alpha + \int_{k}^{\infty} \alpha ^2 |u(1,\alpha)|^2 d\alpha
\end{equation*}
\begin{equation*}
\leq 2\epsilon ^2 +   \frac{ C M^2 } {K^2 E ^{\frac{3}{2}}} + \frac{ \parallel f \parallel_{(2)}^2 (-1,1)} {K^{\frac{2}{3}}E^{\frac{1}{4}}+1} \Big).
\end{equation*}
Using the inequalities in \eqref{lastbound1} and Lemma 2.3., we finally obtain
\begin{equation*}
\parallel f \parallel _{(0)} ^2 (\alpha)\leq C \Big( \epsilon ^2 +   \frac{   M^{2} }{K^2 E ^{\frac{3}{2}}} + \frac{ \parallel f \parallel_{(1)}^2(-1,1) }{K^{\frac{2}{3}}E^{\frac{1}{4}}+1} \Big)
\end{equation*}
Due to the fact that $ K^{\frac{2}{3}} E ^{\frac{1}{4}}<K^2 E ^{\frac{3}{2}}$ for $1<K, 1<E$, the proof is complete.

\end{proof}

\section{Conclusion} In this work, we investigate the inverse source problem with many frequencies in a one dimensional domain with many frequencies. The result showed that if the wave number $K$ grows the estimate improves and the problems more stable. It also showed that if we have date exists for all wave number $k\in (0,\infty) $, the estimate will be a Lipschitz estimate.  It will be very interesting if we can investigate these type of problems in time domain with  different speed of propagation or in a domain with different number of layers and densities . It is also very amazing if one can study these type of problem in domain with corners. From computational point of view, these type of problems have a vast application in different area of science.  \\

\end{document}